\patchcmd\Gread@eps{\@inputcheck#1 }{\@inputcheck"#1"\relax}{}{}
\newtheorem{thm}{Theorem}
\newtheorem{myDef}[thm]{Definition}
\newtheorem{cor}[thm]{Corollary}
\newtheorem{lem}[thm]{Lemma}
\newtheorem{prop}[thm]{Proposition}
\newtheorem{quest}[thm]{Question}
\let\svthefootnote\thefootnote
\newcommand\blankfootnote[1]{%
	\let\thefootnote\relax\footnotetext{#1}%
	\let\thefootnote\svthefootnote%
}
\begin{document}

\title{\Large A property on monochromatic copies of graphs containing a triangle}
	
\date{}
	
\author{
Hao Chen~~~~~~~
Jie Ma
}

\blankfootnote{\noindent School of Mathematical Sciences, University of Science and Technology of China, Hefei, Anhui 230026, China.
Research supported by the National Key R and D Program of China 2020YFA0713100,
National Natural Science Foundation of China grant 12125106,
Innovation Program for Quantum Science and Technology grant 2021ZD0302902,
and Anhui Initiative in Quantum Information Technologies grant AHY150200.}
	
\maketitle
	
\begin{abstract}
A graph $H$ is called {\it common} and respectively, {\it strongly common} if the number of monochromatic copies of $H$ in a 2-edge-coloring $\phi$ of a large clique is asymptotically minimised by the random coloring with an equal proportion of each color and respectively, by the random coloring with the same proportion of each color as in $\phi$.
A well-known theorem of Jagger, {\v S}t'ov{\' i}{\v c}ek and Thomason states that every graph containing a $K_4$ is not common.
Here we prove an analogous result that every graph containing a $K_3$ and with at least four edges is not strongly common.
\end{abstract}

\section{Introduction}
A graph $H$ is said to be {\it common} if the number of monochromatic copies of $H$ in a 2-edge-coloring of a large clique is asymptotically minimised by the uniformly random 2-edge-coloring.
This can be viewed as a quantitative extension of Ramsey's Theorem and has been received extensive attention in the literature.
The classic formula of Goodman~\cite{G59} shows that $K_3$ is common.
Erd{\H{o}}s~\cite{E62} conjectured that all complete graphs are common and this conjecture was extended to all graphs by Burr and Rosta~\cite{BR80}.
Both conjectures fail to be true,
as Sidorenko~\cite{S89} proved that a triangle plus a pendant edge is not common and Thomason~\cite{T89} proved that $K_t$ is not common for any $t\geq 4$.
Jagger, {\v S}t'ov{\' i}{\v c}ek and Thomason~\cite{JST96} further showed that every graph containing a $K_4$ as a subgraph is not common.
We direct interested readers to \cite{BMN22,F08,GLLV,HHNRK12,JST96,KoL22,KVW,LN23,R16,S89,S96,T89} for many interesting results on this topic and to \cite{CHL19,CY11,HKKV22,KNNVW22} for generalizations.

A graph $H$ is {\it strongly common} if the number of monochromatic copies of $H$ in a 2-edge-coloring $\phi$ of a large clique is asymptotically minimised by the random coloring with the same proportion of each color as in $\phi$.
Extending the notion of common graphs, this natural definition was recently formalized by Behague, Morrison and Noel in \cite{BMN22}.
A famous conjecture of Sidorenko \cite{S93} asserts that for any bipartite graph $H$, the number of copies of $H$ in a graph $G$ is asymptotically minimised by the random graph with the same edge density as $G$.
It is evident to see that any bipartite graph satisfying Sidorenko's conjecture is strongly common (see \cite{CFS10,CKLL18,CL17,CL21,FW17,H10,JS21,LS11,LL11,S93,S931,Sz15} for advances on Sidorenko's conjecture).
For non-bipartite graphs, the authors of \cite{BMN22} proved that the triangle and the five-cycle are strongly common,
and they made a conjecture that all odd cycles are strongly common which was proved by Kim and Lee~\cite{KL22} very recently.
The authors of \cite{BMN22} also raised the problem of classifying strongly common graphs.
Towards an understanding of this problem, we prove the following result.
For a graph $H$, we say a graph $G$ {\it properly contains} $H$ if $H$ is a subgraph of $G$ and $e(H)<e(G)$.

\begin{thm}\label{thm:main}
Any graph properly containing a triangle is not strongly common.
\end{thm}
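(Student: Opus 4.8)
I will work in the standard graphon language. Write $t(\cdot,\cdot)$ for homomorphism density and set $m:=e(H)$. For a graphon $W$ with edge density $p:=\iint W$, a $W$-random graph produces, as $n\to\infty$, $2$-edge-colourings of $K_n$ in which the suitably normalised number of monochromatic copies of $H$ tends to $t(H,W)+t(H,1-W)$, whereas the $p$-random colouring gives $t(H,p)+t(H,1-p)=p^m+(1-p)^m$. Hence it suffices to exhibit, for one convenient value of $p\in(0,1)$, a graphon $W$ of density $p$ with
\[
t(H,W)+t(H,1-W)<p^m+(1-p)^m .
\]
I will look for $W$ among the perturbations $W=p+\varepsilon U$ of the constant graphon $p$, where $U\colon[0,1]^2\to\mathbb{R}$ is a fixed bounded symmetric kernel that is \emph{balanced}, i.e.\ $\int_0^1 U(x,y)\,dx=0$ for almost every $y$. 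Balancedness forces $\iint U=0$, so $W$ has density exactly $p$; and since $U$ is bounded, $W$ and $1-W$ are genuine graphons whenever $|\varepsilon|$ is small. (The balance condition is exactly what is needed below: without it the $\varepsilon^2$ term in the relevant expansion is strictly positive, so such perturbations could never beat the constant colouring.)

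The main computation is the exact polynomial expansion
\[
t(H,p+\varepsilon U)=\sum_{F\subseteq E(H)}p^{\,m-|F|}\,\varepsilon^{|F|}\,t_F(U),\qquad
t_F(U):=\int_{[0,1]^{V(H)}}\ \prod_{uv\in F}U(x_u,x_v)\ \prod_{w\in V(H)}dx_w .
\]
The key point is that $t_F(U)=0$ as soon as some connected component of the subgraph of $H$ with edge set $F$ has a vertex of degree one: integrating out such a vertex first pulls out a factor $\int U(x,\cdot)\,dx=0$. Since a connected graph with minimum degree at least two contains a cycle and hence at least three edges, with equality only for $K_3$, we conclude: the term $F=\emptyset$ contributes $p^m$, every term with $1\le|F|\le 2$ vanishes, and among the terms with $|F|=3$ only those with $H[F]\cong K_3$ survive. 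Writing $\tau\ge 1$ for the number of triangles in $H$, this gives
\[
t(H,p+\varepsilon U)=p^m+\tau\,p^{\,m-3}\,t(K_3,U)\,\varepsilon^3+O(\varepsilon^4).
\]
Running the same expansion for $1-W=(1-p)+\varepsilon(-U)$, using that $-U$ is again balanced and that $t(K_3,-U)=-t(K_3,U)$, and adding, yields
\[
t(H,W)+t(H,1-W)=p^m+(1-p)^m+\tau\,t(K_3,U)\,\bigl(p^{\,m-3}-(1-p)^{\,m-3}\bigr)\,\varepsilon^3+O(\varepsilon^4).
\]

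It remains to choose $U$ and $p$. For $U$, take the step kernel on three parts $A_1,A_2,A_3$ of measure $1/3$ with $U\equiv 2/3$ inside each part and $U\equiv -1/3$ across parts; its "row sums" vanish, so it is balanced, and $t(K_3,U)=\tfrac{1}{27}\operatorname{tr}(M^3)=\tfrac{2}{27}>0$, where $M=I-\tfrac{1}{3}J$ is the associated $3\times3$ value matrix (a rank-two projection, so $M^3=M$). For $p$, take $p=1/3$. Since $H$ properly contains a triangle we have $m\ge 4$, hence $m-3\ge 1$ and $(1/3)^{m-3}-(2/3)^{m-3}<0$; therefore the coefficient of $\varepsilon^3$ above is strictly negative. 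Consequently, for all sufficiently small $\varepsilon>0$ the graphon $W=\tfrac{1}{3}+\varepsilon U$ satisfies $t(H,W)+t(H,1-W)<(1/3)^m+(2/3)^m$, so the corresponding colourings of $K_n$ have, for large $n$, fewer monochromatic copies of $H$ than the $(1/3)$-random colouring with the same colour proportions. Hence $H$ is not strongly common.

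I expect the only delicate point to be the vanishing statement for $t_F(U)$ at small $|F|$ — in particular the bookkeeping that confirms no term of order $\varepsilon^{0},\varepsilon^{1},\varepsilon^{2}$ other than the constants $p^m$ and $(1-p)^m$ survives, and that $K_3$ is the unique minimal subgraph forcing a nonzero contribution at order $\varepsilon^3$. The remaining ingredients — the graph-limit counting lemma, the validity of $W$ as a graphon for small $\varepsilon$, and the spectral evaluation $t(K_3,U)=\tfrac{1}{27}\operatorname{tr}(M^3)$ — are routine.
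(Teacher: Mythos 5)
Your proposal is correct, but it takes a genuinely different route from the paper. The paper first rewrites strong commonality, via the substitution $U=2W-1$, as nonnegativity of $\sum_{F\in\mathcal{E}^+(H)}\bigl(t_F(U)-t_{K_2}(U)^{e(F)}\bigr)$ over spanning subgraphs with an even number of edges, then plugs in a two-block kernel $U_p$ and runs a fairly delicate inductive analysis of the polynomials $\Delta_F(p)=t_F(U_p)-(p-1)^{e(F)}$ (divisibility by $p^3$, and the sign of the $p^3$-coefficient, obtained by comparing coefficients of the auxiliary functions $f_{a,b,H}$ and $g_{a,b,H}$), finally taking $p>0$ small. You instead perturb the constant graphon of density $1/3$ by $\varepsilon U$ with $U$ a balanced three-block step kernel: balancedness kills every subgraph containing a pendant vertex, so the exact finite expansion in $\varepsilon$ has no terms of order $1$ or $2$, and the order-$3$ term of $t_H(W)+t_H(1-W)$ beyond the constants is exactly $\tau\,t_{K_3}(U)\bigl(p^{m-3}-(1-p)^{m-3}\bigr)$, which is strictly negative at $p=1/3$ because $m\ge 4$; your verifications ($U$ balanced, $t_{K_3}(U)=2/27$, the higher-order terms uniformly $O(\varepsilon^4)$) are sound, and violating the graphon inequality that the paper takes as the definition of strong commonality is precisely the criterion its Proposition~\ref{prop} also uses, so there is no gap in the reduction. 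Comparing the two: your argument is shorter and isolates the mechanism transparently (only triangles survive at third order, and the factor $p^{m-3}-(1-p)^{m-3}$ breaks the colour symmetry as soon as $e(H)>3$), and the same balanced-perturbation idea appears to extend to the paper's concluding question on girth-$(2k+1)$ graphs, since at order $\varepsilon^{2k+1}$ only shortest odd cycles survive; on the other hand, the paper's heavier machinery is an expansion around edge density $1/2$ and therefore also yields the strengthening stated in its concluding remarks, namely that such graphs are not even locally strongly common around density $1/2$ — your perturbation at $p=1/3$ cannot give that, since your cubic coefficient vanishes at $p=1/2$.
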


This can be viewed as an analogy of the result of Jagger, {\v S}t'ov{\' i}{\v c}ek and Thomason that every graph containing a $K_4$ is not common.
Behague, Morrison and Noel commented in \cite{BMN22} that they were unaware of any graph which is common but not strongly common.
This was addressed in one of the results in the recent paper of Lee and Noel \cite{LN23}, where they found many non-bipartite common graphs which are not strongly common (including the disjoint union of two triangles).
Let us mention a recent beautiful result of Grzesik, Lee, Lidick{\'y} and Volec~\cite{GLLV} which says that any {\it triangle-tree} is common.\footnote{A {\it triangle-tree} is either a triangle or it is obtained from a triangle-tree by identifying a single vertex or an edge of a new triangle with a vertex or an edge.}
Together with this result, we now see a rich family of such graphs in the following.

\begin{cor}
Any triangle-tree (except the triangle itself) is common but not strongly common.
\end{cor}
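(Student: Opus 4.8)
The plan is to simply combine the two ingredients already highlighted just above the statement: the theorem of Grzesik, Lee, Lidick\'y and Volec that every triangle-tree is common, and Theorem~\ref{thm:main} of the present paper that any graph properly containing a triangle is not strongly common. The only point that requires an argument is that a triangle-tree other than $K_3$ itself satisfies the hypothesis of Theorem~\ref{thm:main}, i.e.\ that it \emph{properly} contains a triangle rather than merely containing one.

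First I would recall the recursive definition given in the footnote: a triangle-tree $T$ is either $K_3$, or is obtained from a smaller triangle-tree $T'$ by identifying a vertex or an edge of a new triangle with a vertex or an edge of $T'$. In either case $T$ contains a triangle as a subgraph, so it remains only to verify the strict inequality $e(T) > e(K_3) = 3$ whenever $T \neq K_3$. Since such a $T$ arises from some $T'$ by attaching a fresh triangle, attaching along an edge adds two new edges and attaching along a single vertex adds three new edges; hence $e(T) \geq e(T') + 2 \geq 3 + 2 = 5 > 3$. Therefore every triangle-tree other than $K_3$ properly contains a triangle.

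With this observation the corollary is immediate: by the Grzesik--Lee--Lidick\'y--Volec theorem such a $T$ is common, while by Theorem~\ref{thm:main} it is not strongly common. There is essentially no obstacle here: the only (routine) step is the structural count in the previous paragraph, and all the substance is carried by the two cited results.
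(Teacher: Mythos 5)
Your proposal is correct and matches the paper's (implicit) argument: the corollary is exactly the combination of the Grzesik--Lee--Lidick\'y--Volec theorem with Theorem~\ref{thm:main}, and your edge-count verification that any triangle-tree other than $K_3$ has at least five edges, hence properly contains a triangle, is the only routine check needed.
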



The rest of the paper is organized as follows.
In Section 2, we give out some preliminaries used later.
In Section 3, we build up some key lemmas on a special kernel called $U_p$.
In Section 4, we prove Theorem~\ref{thm:main}.
In Section 5, we conclude this paper with a question and some remarks.

\section{Preliminaries}
Let $G$ be a graph with the vertex set $V(G)$ and the edge set $E(G)$.
Denote the numbers of vertices and edges of $G$ by $v(G)$ and $e(G)$, respectively.
For a subset $A\subseteq V(G)$, we write $G[A]$ for the subgraph of $G$ induced by $A$.
We write $[k]$ for the set $\{1,2,...,k\}$ for $k\in \mathbf{Z}^+$.

In this paper, all integrations are taken with respect to the Lebesgue measure and when we use the notion of measure we always mean Lebesgue measure.
For a measurable set $S$, we let $\mu(S)$ be its Lebesgue measure.
A {\it kernel} is a measurable and bounded function $U:[0,1]^2\rightarrow \mathbb{R}$ such that $U(x,y)=U(y,x)$ for every $(x,y)\in [0,1]^2$.
\begin{myDef}
Let $H$ be a graph. For a kernel $U$, the homomorphism density from $H$ to $U$ is defined as
$$t_H(U)=\int_{[0,1]^{v(H)}}{\prod_{ij\in E(H)}U(x_i,x_j)\prod_{i=1}^{v(H)}dx_i}.$$
\end{myDef}	
Using this notation, we see (i.e., Lov{\'a}sz \cite{LL12}) that a graph $H$ is {\it strongly common}, if the inequality
\begin{equation}\label{equ:str-common}
t_H(W)+t_H(1-W)\geq t_{K_2}(W)^{e(H)}+t_{K_2}(1-W)^{e(H)}
\end{equation}
holds for every kernel $W:[0,1]^2\rightarrow [0,1]$.
In what follows, we transform the above {\it strongly commonality} of graphs to a formulae which is easier to deal with (i.e., see Kim and Lee \cite{KL22}).
Let $\mathcal{E}^{+}(H)$ be the set of all spanning subgraphs $F$ of $H$ with positive even number of edges.
For a kernel $W:[0,1]^2\rightarrow [0,1]$, we define $U:=2W-1$.
So $U$ is a kernel taking values in $[-1,1]$.
By standard multilinear expansion, we have
\begin{equation}
\begin{aligned}
t_H(W)+t_H(1-W)&=\frac{t_H(1+U)+t_H(1-U)}{2^{e(H)}}=2^{1-e(H)}\left(1+\sum_{F\in \mathcal{E}^+(H)}t_F(U)\right).
\end{aligned}
\end{equation}
Similarly, we can expand $t_{K_2}(W)^{e(H)}+t_{K_2}(1-W)^{e(H)}$ to obtain
\begin{equation}
\begin{aligned}
t_{K_2}(W)^{e(H)}+t_{K_2}(1-W)^{e(H)}=2^{1-e(H)}\left(1+\sum_{F\in \mathcal{E}^+(H)}t_{K_2}(U)^{e(F)}\right).
\end{aligned}
\end{equation}
Thus to answer if \eqref{equ:str-common} holds for $H$, we just need to determine the sign of the formula $$\sum_{F\in \mathcal{E}^+(H)}\left(t_F(U)-t_{K_2}(U)^{e(F)}\right).$$

We summarize this as the following property.

\begin{prop}\label{prop}
Let $H$ be a graph. If $\sum_{F\in \mathcal{E}^+(H)}\left(t_F(U)-t_{K_2}(U)^{e(F)}\right)\geq 0$ holds for every kernel $U$ taking values in $[-1,1]$, then $H$ is strongly common.
On the other hand, if there exists a kernel $U$ taking values in $[-1,1]$ such that $\sum_{F\in \mathcal{E}^+(H)}\left(t_F(U)-t_{K_2}(U)^{e(F)}\right)<0,$ then $H$ is not strongly common.
\end{prop}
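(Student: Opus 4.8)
The plan is to read the statement off from the two multilinear expansions already displayed above, combined with the affine change of variables $W\mapsto U:=2W-1$ that was used to produce them. Recall from the discussion preceding the proposition that \eqref{equ:str-common} holding for every kernel $W:[0,1]^2\to[0,1]$ is equivalent to $H$ being strongly common (cf.\ Lov\'asz \cite{LL12}); so it suffices to translate \eqref{equ:str-common} into a statement about kernels valued in $[-1,1]$.

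First I would record the change of variables. The map $W\mapsto U:=2W-1$ is a bijection from the set of kernels $W:[0,1]^2\to[0,1]$ onto the set of kernels $U:[0,1]^2\to[-1,1]$, with inverse $U\mapsto (1+U)/2$; it preserves measurability, boundedness and symmetry, so it does send kernels to kernels. For such a pair we have $W=(1+U)/2$ and $1-W=(1-U)/2$, hence $t_H(W)=2^{-e(H)}t_H(1+U)$, $t_H(1-W)=2^{-e(H)}t_H(1-U)$, and $t_{K_2}(W)=(1+t_{K_2}(U))/2$, $t_{K_2}(1-W)=(1-t_{K_2}(U))/2$.

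Next I would invoke the two identities displayed in the text. Expanding $\prod_{ij\in E(H)}\bigl(1\pm U(x_i,x_j)\bigr)$ over all $2^{e(H)}$ subsets of $E(H)$, integrating term by term (a finite sum of integrable functions), and noting that a subset $F$ with $e(F)$ edges contributes $t_F(U)$ with $t_\emptyset(U)=1$, one gets $t_H(1+U)+t_H(1-U)=2\bigl(1+\sum_{F\in\mathcal{E}^+(H)}t_F(U)\bigr)$; the analogous binomial expansion of $(1\pm t_{K_2}(U))^{e(H)}$, together with the fact that the number of spanning subgraphs of $H$ with exactly $2k$ edges is $\binom{e(H)}{2k}$, gives $(1+t_{K_2}(U))^{e(H)}+(1-t_{K_2}(U))^{e(H)}=2\bigl(1+\sum_{F\in\mathcal{E}^+(H)}t_{K_2}(U)^{e(F)}\bigr)$. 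Substituting back through the relations of the previous paragraph yields
\[
\bigl(t_H(W)+t_H(1-W)\bigr)-\bigl(t_{K_2}(W)^{e(H)}+t_{K_2}(1-W)^{e(H)}\bigr)=2^{1-e(H)}\sum_{F\in\mathcal{E}^+(H)}\bigl(t_F(U)-t_{K_2}(U)^{e(F)}\bigr),
\]
the two constant terms $2^{1-e(H)}$ cancelling. Since $2^{1-e(H)}>0$, inequality \eqref{equ:str-common} holds for the kernel $W$ exactly when $\sum_{F\in\mathcal{E}^+(H)}\bigl(t_F(U)-t_{K_2}(U)^{e(F)}\bigr)\ge 0$ for the corresponding kernel $U=2W-1$.

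Finally I would deduce both halves. If the displayed sum is nonnegative for every kernel $U$ valued in $[-1,1]$, then, since $W\mapsto 2W-1$ maps onto that class, \eqref{equ:str-common} holds for every kernel $W$ valued in $[0,1]$, so $H$ is strongly common. Conversely, given a kernel $U_0$ valued in $[-1,1]$ with the sum strictly negative, the kernel $W_0:=(1+U_0)/2$ is valued in $[0,1]$ and violates \eqref{equ:str-common}, so $H$ is not strongly common. There is no real obstacle here: the argument is pure bookkeeping, and the only points needing a line of justification are the term-by-term expansions and the counting of spanning subgraphs by their number of edges, both of which are exactly what was sketched in the text preceding the statement.
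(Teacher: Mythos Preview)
Your proposal is correct and follows exactly the approach the paper uses: the paper does not give a separate proof of the proposition but simply summarizes the computation done in the preceding paragraphs, and your write-up is a careful fleshing-out of precisely those two multilinear expansions and the affine change of variables $U=2W-1$.
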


\section{The kernel $U_p$}
In this section we consider the following kernel $U_p$ and several propositions on its homomorphism density $t_F(U_p)$ which play important roles in the proofs.
For $p\in [0,1]$, we define
\begin{equation}
U_p(x,y):=\left\{
\begin{aligned}
&2p-1 & \mbox{ if } (x,y)\in [0,1/2)\times[0,1/2)\ \mbox{or} \ [1/2,1]\times [1/2,1];\\
&-1 & \mbox{ if } (x,y)\in [0,1/2)\times [1/2,1]\ \mbox{or} \ [1/2,1]\times [0,1/2).
\end{aligned}
\right.
\end{equation}
The kernel $U_p$ can be viewed as a weighted complete graph whose vertex set is a disjoint union of two subsets $V_1$ and $V_2$,
where each of $V_1$ and $V_2$ induces a complete graph with edge-weight $2p-1$ for every edge, and all edges between $V_1$ and $V_2$ have edge-weight $-1$.

It is easy to see $t_{K_2}(U_p)=p-1$.
Throughout this section, we let $I_1=[0,1/2)$ and $I_2=[1/2,1]$.
For a string $S=s_1s_2...s_{v(H)}\in \{1,2\}^{v(H)}$,
we define $$B_S=\{(x_1,x_2,...,x_{v(H)}):x_k\in I_{s_k},k=1,2,...,v(H)\}$$ to be the corresponding domain in $[0,1]^{v(H)}$ determined by $S$.
There are $2^{v(H)}$ strings in total (say $S_1, S_2,...,S_{2^{v(H)}}$) and let $\mathcal{B}=\{B_{1},B_{2},...,B_{2^{v(H)}}\}$ be the collection of all domains in $[0,1]^{v(H)}$ corresponding to strings $S_i$ for $1\leq i\leq 2^{v(H)}$.
Note that for any $i,j\in \{1,2\}$, if $x, y$ are in the same interval $I_i$ and $x',y'$ are in the same interval $I_j$ as well,
then by the definition of $U_p$, we have $U_p(x,x')=U_p(y,y')$.
Consider any $B_k\in \mathcal{B}$.
Then $\mu(B_k)=1/2^{v(H)}$ and for any tuple $(x_1,x_2,...,x_{v(H)})\in B_k$,
the value $h_k:=\prod_{ij\in E(H)}U_p(x_i,x_j)$ only depends on $B_k$.
Thus, we have
\begin{equation}
t_H(U_p)=\int_{[0,1]^{v(H)}}\prod_{ij\in E(H)}U_p(x_i,x_j)\prod_{i=1}^{v(H)}dx_i=\frac{1}{2^{v(H)}}\sum_{k=1}^{2^{v(H)}}h_k.
\end{equation}
Next we introduce two variants of $t_H(U_p)$. For $a,b\in V(H)$ and $1\leq i\neq j\leq 2$, we define
$$f_{a,b,H}^i(p)=\int_{I_i\times I_i}\left(\int_{[0,1]^{v(H)-2}}\prod_{uv\in E(H)}U_p(x_u,x_v)\prod_{k\in V(H)\backslash\{a,b\}}dx_k\right)dx_adx_b$$ and
$$g_{a,b,H}^{ij}(p)=\int_{I_i\times I_j}\left(\int_{[0,1]^{v(H)-2}}\prod_{uv\in E(H)}U_p(x_u,x_v)\prod_{k\in V(H)\backslash\{a,b\}}dx_k\right)dx_adx_b.$$
It is clear that by symmetric, $f_{a,b,H}^1=f_{a,b,H}^2$ and $g_{a,b,H}^{12}=g_{a,b,H}^{21}$.
For brevity, we let $f_{a,b,H}:=f_{a,b,H}^1=f_{a,b,H}^2$ and $g_{a,b,H}:=g_{a,b,H}^{12}=g_{a,b,H}^{21}$.
Thus we have $$t_H(U_p)=f_{a,b,H}^1(p)+f_{a,b,H}^2(p)+g_{a,b,H}^{12}(p)+g_{a,b,H}^{21}(p)=2f_{a,b,H}(p)+2g_{a,b,H}(p).$$
Also by the definition, we have that $f_{a,b,H}(0)=(-1)^{e(H)}/4$.

The following lemma is key for the proof of our main result.
For a polynomial $f(p)$ on the variable $p$, we define $[j]f$ as the coefficient of $p^j$ in $f(p)$.

\begin{lem}\label{lem:key}
Let $H$ be a graph and $a,b$ be two vertices with $ab\notin E(H)$.
Write $f=f_{a,b,H}$ and $g=g_{a,b,H}$.
Then for any $j=0,1,...,e(H)$, we have $[j]f\cdot [j]g\geq 0$.
Moreover, $[0]f=[0]g$, $[1]f=[1]g$ and $|[2]f|\geq |[2]g|$.
\end{lem}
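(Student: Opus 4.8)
The plan is to compute $f_{a,b,H}(p)$ and $g_{a,b,H}(p)$ by expanding the defining integrals over the product domain $[0,1]^{v(H)}$ and organizing the resulting sum according to how the vertices of $H$ distribute between $I_1$ and $I_2$. Fix the vertices $a,b$ and a partition of the remaining $v(H)-2$ vertices into those landing in $I_1$ versus those in $I_2$; together with the choice of which interval $a$ and $b$ lie in, this determines one of the domains $B_k\in\mathcal{B}$. On each such $B_k$ the integrand $\prod_{uv\in E(H)}U_p(x_u,x_v)$ is the constant $h_k$, which factors as $(2p-1)^{m_k}\cdot(-1)^{c_k}$, where $m_k$ is the number of edges of $H$ with both endpoints on the same side of the partition and $c_k$ is the number of edges crossing it (the \emph{cut} edges). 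Thus $f$ is $1/4$ times a sum of terms $(2p-1)^{m}(-1)^{c}$ over partitions in which $a,b$ lie on the \emph{same} side, and $g$ is the analogous sum over partitions in which $a,b$ lie on \emph{opposite} sides; the leading factor $1/4=\mu(I_i\times I_i)/(\text{remaining measure normalization})$ matches the observation $f_{a,b,H}(0)=(-1)^{e(H)}/4$.

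The key combinatorial bijection is this: since $ab\notin E(H)$, moving the vertex $b$ from one side of a partition to the other sets up a pairing between partitions counted by $f$ and partitions counted by $g$. Under this swap, $a$ and $b$ go from same-side to opposite-side; the number of edges incident to $b$ that are cut changes by $\pm(\deg_H(b))$ in the sense that each edge at $b$ flips between "internal" and "cut," while no edge not incident to $b$ changes status (here we crucially use $ab\notin E(H)$, so the edge $ab$ does not exist to complicate the count), and edges at $b$ going to $a$'s side — there are none relevant beyond what the degree records. Writing everything in the variable $p$ via $(2p-1)^m=\sum_{j}\binom{m}{j}2^j(-1)^{m-j}p^j$, one reads off $[j]f$ and $[j]g$ as signed sums of binomial coefficients indexed by the partitions. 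I would then argue term-by-term, or in matched pairs under the swap, that the contributions to $[j]f$ and to $[j]g$ carry the same overall sign $(-1)^{?}$ depending only on $j$, $e(H)$, and parity data independent of the partition — giving $[j]f\cdot[j]g\ge 0$. The equalities $[0]f=[0]g$ and $[1]f=[1]g$ should fall out because the $p^0$ and $p^1$ coefficients only "see" partitions with at most one internal edge removed, where the swap across $b$ is measure-preserving and sign-preserving; the inequality $|[2]f|\ge|[2]g|$ reflects that the $p^2$ level first detects pairs of internal edges, and internal-edge pairs are at least as plentiful on the $f$-side (same side for $a,b$) as on the $g$-side.

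The main obstacle I anticipate is making the sign bookkeeping uniform across all partitions simultaneously: the factors $(-1)^{c_k}$ depend on the cut size, which varies wildly with the partition, so one cannot naively pull a common sign out of the sum. The resolution I would pursue is to reparametrize via $U_p = -1 + 2p\cdot\mathbf{1}[\text{same side}]$, i.e.\ expand each internal edge-weight as $(-1)+2p\cdot[\text{same side}]$ and each cut edge-weight as exactly $-1$; this turns $t_F(U_p)$-type integrals into a sum over \emph{subsets} $R\subseteq E(H)$ of edges "selected to contribute the $2p$ term," each forcing its endpoints to the same side, weighted by $(-1)^{e(H)-|R|}2^{|R|}p^{|R|}$ times the measure of the set of partitions consistent with $R$ being monochromatic. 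Then $[j]f$ becomes $(-1)^{e(H)-j}2^{j}$ times a sum of \emph{nonnegative} measures over $j$-edge subsets $R$ with $a,b$ forced same-side-consistent, and likewise for $[j]g$ with $a,b$ opposite; the common sign $(-1)^{e(H)-j}$ then gives $[j]f\cdot[j]g\ge0$ immediately, the cases $j=0,1$ give equality because a subset of $0$ or $1$ edges never constrains the pair $\{a,b\}$ relative to each other (as $ab\notin E(H)$), and $|[2]f|\ge|[2]g|$ because every $2$-edge subset consistent with $a,b$ opposite is also consistent with $a,b$ same-side after at most relabeling, while the converse fails exactly for $2$-edge subsets that connect $a$ to $b$ through a common neighbor. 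I would carry out this subset-expansion argument carefully, as it cleanly separates the sign (a global factor) from the magnitude (a sum of probabilities), which is exactly what the lemma asserts.
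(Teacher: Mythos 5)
Your final argument (the subset expansion) is correct, but it is organized quite differently from the paper's proof. The paper works box-by-box: it fixes the assignment of all vertices to the halves $I_1,I_2$, observes that on each box $h_k=(2p-1)^{m_k}(-1)^{e(H)-m_k}$, so that $[j]h_k=2^j\binom{m_k}{j}(-1)^{e(H)-j}$ carries a sign depending only on $j$ and $e(H)$ (which already gives $[j]f\cdot[j]g\ge 0$), and then proves $[1]f=[1]g$ and $|[2]f|\ge|[2]g|$ by explicit binomial-coefficient manipulations: pairing boxes, parametrizing them by the counts $(\alpha^\pm,\beta^\pm,c^\pm)$ of neighbours of $a$ and $b$ on each side, and double counting until the difference becomes a sum of terms $(2c^+-C)^2\ge 0$. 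Your reparametrization $U_p=-1+2p\cdot\mathbf{1}[\text{same side}]$ instead expands multilinearly over edge subsets $R\subseteq E(H)$, giving $[j]f=(-1)^{e(H)-j}2^j\sum_{|R|=j}\mu_f(R)$ and likewise for $g$, where $\mu_f(R),\mu_g(R)$ are nonnegative measures; the sign claim is then immediate, and the comparisons reduce to $\mu_f(R)\ge\mu_g(R)$, which holds because the event forces each component of the graph $(V(H),R)$ to be monochromatic: flipping the side of the component containing $b$ shows $\mu_f(R)=\mu_g(R)$ whenever $a$ and $b$ lie in different components (in particular for all $|R|\le 1$, using $ab\notin E(H)$), while $\mu_g(R)=0\le\mu_f(R)$ when they lie in the same component, which for $|R|=2$ happens exactly for the paths $a\!-\!v\!-\!b$ through a common neighbour $v$. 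The two proofs are essentially Fubini-rearrangements of one another (the paper's $\binom{m_k}{j}$ counts the $j$-subsets of same-side edges within a box), but your route buys a cleaner separation of sign from magnitude, in fact yields the stronger conclusion $|[j]f|\ge|[j]g|$ for every $j$, and gives Corollary~\ref{coro} for free. The one point to tighten when writing it up is the $j=2$ step: your phrase ``consistent \ldots after at most relabeling'' should be replaced by the quantitative inequality $\mu_f(R)\ge\mu_g(R)$ proved via the component-flip symmetry just described, since mere consistency (non-vanishing) of the events does not by itself compare the two sums.
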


\begin{proof}
First note that $4\cdot [0]f=4f_{a,b,H}(0)=(-1)^{e(H)}=4g_{a,b,H}(0)=4\cdot [0]g,$ implying that $[0]f=[0]g$.
In this proof, we use $x_a$ and $x_b$ to express the first two variables in $[0,1]^{v(H)}$, which correspond to the vertices $a$ and $b$ respectively.

Let $\mathcal{B}_f=\{B_1,B_2,...,B_{2^{v(H)-2}}\}$ be the collection of domains in $[0,1]^{v(H)}$ such that each $B_k\in \mathcal{B}_f$ satisfies $x_a,x_b\in I_1$.
Let $\mathcal{B}_g=\{B_{2^{v(H)-2}+1},B_{2^{v(H)-2}+2},...,B_{2^{v(H)-1}}\}$ be the  collection of domains in $[0,1]^{v(H)}$ such that each $B_k\in \mathcal{B}_f$ satisfies $x_a\in I_1$ and $x_b\in I_2.$
Without loss of generality, we may assume the following. For any $i_s\in \{1,2\}$, we write $j_s\in \{1,2\}\backslash \{i_s\}$.
If $B_k\in \mathcal{B}_f$ has the form $B_k=I_1\times I_1\times \prod_{s=1}^{2^{v(H)-2}} I_{i_s}$,
then
\begin{itemize}
\item $B_{2^{v(H)-2}-k+1}\in \mathcal{B}_f$ satisfies $B_{2^{v(H)-2}-k+1}=I_1\times I_1\times \prod_{s=1}^{2^{v(H)-2}} I_{j_s},$
\item $B_{2^{v(H)-2}+k}\in \mathcal{B}_g$ satisfies $B_{2^{v(H)-2}+k}=I_1\times I_2\times \prod_{s=1}^{2^{v(H)-2}} I_{i_s},$ and
\item $B_{2^{v(H)-1}-k+1}\in \mathcal{B}_g$ satisfies $B_{2^{v(H)-1}-k+1}=I_1\times I_2\times \prod_{s=1}^{2^{v(H)-2}} I_{j_s}.$
\end{itemize}
Recall that $h_k:=\prod_{ij\in E(H)}U_p(x_i,x_j)$ for any $(x_a,x_b,x_3,x_4,...,x_{v(H)})\in B_k$.
Then we have
\begin{equation}\label{equ:f&g}
4f_{a,b,H}=\sum_{k=1}^{2^{v(H)-2}}\frac{1}{2^{v(H)-2}}h_{k} \mbox{ ~ and ~ } 4g_{a,b,H}=\sum_{k=2^{v(H)-2}+1}^{2^{v(H)-1}}\frac{1}{2^{v(H)-2}}h_{k}.
\end{equation}
Let $H^*$ be obtained from $H$ by deleting the vertices $a$ and $b$ and for a given $B_k$,
we define $$h^*_{k}=\prod_{ij\in E(H^*)}U_p(x_i,x_j) \mbox{~ for any ~} (x_a,x_b,x_3,x_4,...,x_{v(H)})\in B_k.$$
Then we may assume that $h^*_{k}=(2p-1)^{t_{k}}(-1)^{e(H^*)-t_{k}}$ for some non-negative integer $t_k$.
Let $A=|N(a)|, B=|N(b)|$, and $C=|N(a)\cap N(b)|.$
We further assume that there are $\alpha^+$ and $\alpha^-$ vertices in $N(a)/(N(a)\cap N(b))$ whose corresponding variables in $B_k$ belong to $I_1$ and $I_2$, respectively;
there are $\beta^+$ and $\beta^-$ vertices in $N(b)/(N(a)\cap N(b))$ whose corresponding variables in $B_k$ belong to $I_1$ and $I_2$, respectively;
and there are $c^+$ and $c^-$ vertices in $N(a)\cap N(b)$ whose corresponding variables in $B_k$ belong to $I_1$ and $I_2$, respectively.
By these definitions, we have that $A=\alpha^++\alpha^- +c^++c^-$, $B=\beta^++\beta^-+c^++c^-$, and $C=c^++c^-$.
Since $B_k\in \mathcal{B}_f$, we can rewrite
\begin{equation*}
\begin{aligned}
h_{k}&=h_k^*\cdot(2p-1)^{\alpha^++2c^++\beta^+}(-1)^{\alpha^-+2c^-+\beta^-}=(2p-1)^{t_k+\alpha^++2c^++\beta^+}(-1)^{e(H^*)-t_k+\alpha^-+2c^-+\beta^-}.
\end{aligned}
\end{equation*}
As $e(H)-e(H^*)=\alpha^++\alpha^-+\beta^++\beta^-+2c^++2c^-$,
the coefficient of $p^j$ in $h_k$ is
\begin{equation*}
\begin{aligned}
\mbox{$[j]h_k$}=& 2^j\binom{t_k+\alpha^++2c^++\beta^+}{j}(-1)^{(t_k+\alpha^++2c^++\beta^+-j)+(e(H^*)-t_k+\alpha^-+2c^-+\beta^-)}\\
=& 2^j\binom{t_k+\alpha^++2c^++\beta^+}{j}(-1)^{e(H)-j},
\end{aligned}
\end{equation*}

Next consider $B_{2^{v(H)-2}-k+1}\in \mathcal{B}_f$.
By symmetric, we have $h^*_{{2^{v(H)-2}-k+1}}=h^*_{k}$ and thus
\begin{equation*}
\begin{aligned}
h_{2^{v(H)-2}-k+1}=h^*_k\cdot (2p-1)^{\alpha^-+2c^-+\beta^-}(-1)^{\alpha^++2c^++\beta^+}=(2p-1)^{t_k+\alpha^-+2c^-+\beta^-}(-1)^{e(H^*)-t_k+\alpha^++2c^++\beta^+}.
\end{aligned}
\end{equation*}
Then the coefficient of $p^i$ in $h_{{2^{v(H)-2}-k+1}}$ is
\begin{equation*}
\begin{aligned}
\mbox{$[j]h_{{2^{v(H)-2}-k+1}}$}=2^j\binom{t_k+\alpha^-+2c^-+\beta^-}{j}(-1)^{e(H)-j}.
\end{aligned}
\end{equation*}
Consider $B_{2^{v(H)-2}+k}\in \mathcal{B}_g$. Again we have $h^*_{{2^{v(H)-2}+k}}=h^*_{k}$. So it follows that $h_{2^{v(H)-2}+k}$ equals
\begin{equation*}
\begin{aligned}
h^*_k\cdot (2p-1)^{\alpha^++c^++c^-+\beta^-}(-1)^{\alpha^-+c^-+c^++\beta^+}=(2p-1)^{t_k+\alpha^++c^++c^-+\beta^-}(-1)^{e(H^*)-t_k+\alpha^-+c^++c^-+\beta^+}
\end{aligned}
\end{equation*}
and the coefficient of $p^j$ in $h_{{2^{v(H)-2}+k}}$ is
\begin{equation*}
\begin{aligned}
\mbox{$[j]h_{2^{v(H)-2}+k}$}=2^j\binom{t_k+\alpha^++c^++c^-+\beta^-}{j}(-1)^{e(H)-j}.
\end{aligned}
\end{equation*}
Finally consider $B_{2^{v(H)-1}-k+1}\in \mathcal{B}_g.$
Since $h^*_{{2^{v(H)-1}-k+1}}=h^*_{k}$, we have $h_{2^{v(H)-1}-k+1}$ equals
\begin{equation*}
\begin{aligned}
h^*_{k}\cdot (2p-1)^{\alpha^-+c^-+c^++\beta^+}(-1)^{\alpha^++c^++c^-+\beta^-}=(2p-1)^{t_k+\alpha^-+c^-+c^++\beta^+}(-1)^{e(H^*)-t_k+\alpha^++c^++c^-+\beta^-}
\end{aligned}
\end{equation*}
and the coefficient of $p^j$ in $h_{{2^{v(H)-1}-k+1}}$ is
\begin{equation*}
\begin{aligned}
\mbox{$[j]h_{2^{v(H)-1}-k+1}$}=2^j\binom{t_k+\alpha^-+c^++c^-+\beta^+}{j}(-1)^{e(H)-j}.
\end{aligned}
\end{equation*}

Using \eqref{equ:f&g} we can get
\begin{equation*}
4\cdot [j]f=\sum_{k=1}^{2^{v(H)-2}}\frac{[j]h_k}{2^{v(H)-2}}=\sum_{k=1}^{2^{v(H)-3}}\frac{1}{2^{v(H)-2}}\Big([j]h_k+[j]h_{{2^{v(H)-2}-k+1}}\Big)
\end{equation*}
and
\begin{equation*}
4\cdot [j]g=\sum_{k=1}^{2^{v(H)-2}}\frac{[j]h_{2^{v(H)-2}+k}}{2^{v(H)-2}}=\sum_{k=1}^{2^{v(H)-3}}\frac{1}{2^{v(H)-2}}\Big([j]h_{2^{v(H)-2}+k}+[j]h_{2^{v(H)-1}-k+1}\Big).
\end{equation*}
By the above formulas of $[j]h_k$ (note that they all have the same parity), this implies that $[j]f\cdot [j]g\geq 0$. Moreover, we see that $\big|[j]f\big|-\big|[j]g\big|$ equals
\begin{equation*}
\frac{1}{2^{v(H)}}\sum_{k=1}^{2^{v(H)-3}}\Big(\left(\big|[j]h_k\big|+\big|[j]h_{{2^{v(H)-2}-k+1}}\big|\right)-\left(\big|[j]h_{2^{v(H)-2}+k}\big|+\big|[j]h_{2^{v(H)-1}-k+1}\big|\right)\Big).
\end{equation*}

If we let $j=1$, then for any $k$, the $k$-th term in the above formula vanishes as follows
\begin{equation*}
\begin{aligned}
&\frac12\Big(\left(\big|[1]h_k\big|+\big|[1]h_{{2^{v(H)-2}-k+1}}\big|\right)-\left(\big|[1]h_{2^{v(H)-2}+k}\big|+\big|[1]h_{2^{v(H)-1}-k+1}\big|\right)\Big)\\
=&(t_k+\alpha^++2c^++\beta^+)+(t_k+\alpha^-+2c^-+\beta^-)\\
&-(t_k+\alpha^++c^++c^-+\beta^-)-(t_k+\alpha^-+c^-+c^++\beta^+)\\
=&0.
\end{aligned}
\end{equation*}
Since $[j]f\cdot [j]g\geq 0$ holds for any $j$, this shows that $[1]f=[1]g$.

It remains to consider the case when $j=2$.
In this case, for any $k$ we have
\begin{equation*}
\begin{aligned}
&\frac{1}{4}\Big(\left(\big|[2]h_k\big|+\big|[2]h_{{2^{v(H)-2}-k+1}}\big|\right)-\left(\big|[2]h_{2^{v(H)-2}+k}\big|+\big|[2]h_{2^{v(H)-1}-k+1}\big|\right)\Big)\\
=&\binom{t_k+\alpha^++2c^++\beta^+}{2}+\binom{t_k+\alpha^-+2c^-+\beta^-}{2}\\
&-\binom{t_k+\alpha^++c^++c^-+\beta^-}{2}-\binom{t_k+\alpha^-+c^-+c^++\beta^+}{2}\\
=&(c^+-c^-+\alpha^+-\alpha^-)(c^+-c^-+\beta^+-\beta^-)=(2c^++2\alpha^+-A)(2c^++2\beta^+-B).
\end{aligned}
\end{equation*}
Note that this only depends on the values of $\alpha^+,c^+$ and $\beta^+$.
Let $K$ be the number of vertices which are not adjacent to any of $a,b$.
We also have $0\leq \alpha^+\leq A-C, 0\leq c^+\leq C$ and $0\leq \beta^+ \leq B-C$,
Summing over all $k$, by double counting we have that
\begin{equation*}
\begin{aligned}
&\big|[2]f\big|-\big|[2]g\big|\\
=&\frac{1}{2^{v(H)}}\sum_{k=1}^{2^{v(H)-3}}\Big(\left(\big|[2]h_k\big|+\big|[2]h_{{2^{v(H)-2}-k+1}}\big|\right)-\left(\big|[2]h_{2^{v(H)-2}+k}\big|+\big|[2]h_{2^{v(H)-1}-k+1}\big|\right)\Big)\\
=&\frac{2^K}{2^{v(H)+1}}\sum_{c^+=0}^{C}\binom{C}{c^+}\sum_{\alpha^+=0}^{A-C}\binom{A-C}{\alpha^+}\sum_{\beta^+=0}^{B-C}\binom{B-C}{\beta^+}\cdot 4(2c^++2\alpha^+-A)(2c^++2\beta^+-B).
\end{aligned}
\end{equation*}
Therefore, we can further get that $\big|[2]f\big|-\big|[2]g\big|$ is equal to
\begin{equation*}
\begin{aligned}
&\frac{2^K}{2^{v(H)-1}}\sum_{c^+=0}^{C}\binom{C}{c^+}\left(\sum_{\alpha^+=0}^{A-C}\binom{A-C}{\alpha^+}(2c^++2\alpha^+-A)\right)\cdot \left(\sum_{\beta^+=0}^{B-C}\binom{B-C}{\beta^+}(2c^++2\beta^+-B)\right)\\
=&\frac{2^K}{2^{v(H)-1}}\sum_{c^+=0}^{C}\binom{C}{c^+}\left(\frac{1}{2}\sum_{\alpha^+=0}^{A-C}\binom{A-C}{\alpha^+}\big(2c^++2\alpha^+-A+2c^++2(A-C-\alpha^+)-A\big)\right)\\
&\ \ \ \cdot \left(\frac{1}{2}\sum_{\beta^+=0}^{B-C}\binom{B-C}{\beta^+}\big(2c^++2\beta^+-B+2c^++2(B-C-\beta^+)-B\big)\right)\\
=&\frac{2^K}{2^{v(H)-1}}\sum_{c^+=0}^{C}\binom{C}{c^+}\sum_{\alpha^+=0}^{A-C}\binom{A-C}{\alpha^+}\sum_{\beta^+=0}^{B-C}\binom{B-C}{\beta^+}(2c^+-C)^2\\
\geq & 0.
\end{aligned}
\end{equation*}
This proves that $\big|[2]f\big|\geq\big|[2]g\big|$, completing the proof.
\end{proof}

It is easy to see that the proof of this lemma also yields the following corollary.
\begin{cor}\label{coro}
Let $H$ be a graph and $a,b$ be two vertices with $ab\notin E(H)$.
Then $[2]f_{a,b,H}=[2]g_{a,b,H}$ if and only if $N_H(a)\cap N_H(b)=\emptyset$.
\end{cor}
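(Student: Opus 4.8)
The plan is to read the conclusion straight off the computation already carried out in the proof of Lemma~\ref{lem:key}. Recall that there we established two facts: first, that $[2]f_{a,b,H}\cdot[2]g_{a,b,H}\ge 0$, and second, the explicit identity
\begin{equation*}
\big|[2]f_{a,b,H}\big|-\big|[2]g_{a,b,H}\big|=\frac{2^K}{2^{v(H)-1}}\sum_{c^+=0}^{C}\binom{C}{c^+}\sum_{\alpha^+=0}^{A-C}\binom{A-C}{\alpha^+}\sum_{\beta^+=0}^{B-C}\binom{B-C}{\beta^+}(2c^+-C)^2,
\end{equation*}
where $C=|N_H(a)\cap N_H(b)|$. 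The first step is the elementary remark that, since $[2]f_{a,b,H}$ and $[2]g_{a,b,H}$ have the same sign (or one of them vanishes) by the first fact, the equality $[2]f_{a,b,H}=[2]g_{a,b,H}$ is equivalent to $\big|[2]f_{a,b,H}\big|=\big|[2]g_{a,b,H}\big|$, i.e.\ to the vanishing of the right-hand side of the displayed identity.

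Next I would determine when that sum is zero. Every summand is a product of nonnegative quantities, so the sum vanishes if and only if every summand does. If $C=0$, then the $c^+$-sum has a single term, with factor $(2c^+-C)^2=(2\cdot0-0)^2=0$, so the whole expression is zero and hence $[2]f_{a,b,H}=[2]g_{a,b,H}$. Conversely, suppose $C\ge 1$ and consider the summand indexed by $c^+=0$: it equals
\begin{equation*}
\frac{2^K}{2^{v(H)-1}}\cdot\binom{C}{0}\cdot\Big(\sum_{\alpha^+=0}^{A-C}\binom{A-C}{\alpha^+}\Big)\cdot\Big(\sum_{\beta^+=0}^{B-C}\binom{B-C}{\beta^+}\Big)\cdot C^2=\frac{2^{K}\cdot2^{A-C}\cdot2^{B-C}}{2^{v(H)-1}}\cdot C^2>0,
\end{equation*}
while all remaining summands are nonnegative; hence $\big|[2]f_{a,b,H}\big|>\big|[2]g_{a,b,H}\big|$ and in particular $[2]f_{a,b,H}\ne[2]g_{a,b,H}$. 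Combining the two directions yields the corollary.

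There is essentially no new obstacle here: the whole content sits inside the formula proved for Lemma~\ref{lem:key}, and the only points that deserve to be made explicit are the passage from $\big|[2]f\big|=\big|[2]g\big|$ to $[2]f=[2]g$ (which uses the sign condition $[2]f\cdot[2]g\ge 0$), and the fact that the inner sums $\sum_{\alpha^+}\binom{A-C}{\alpha^+}$ and $\sum_{\beta^+}\binom{B-C}{\beta^+}$ are strictly positive even in the degenerate cases $A=C$ or $B=C$ (they equal $2^{A-C}\ge 1$ and $2^{B-C}\ge 1$), so that the $c^+=0$ term is genuinely positive whenever $C\ge 1$.
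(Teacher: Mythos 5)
Your proposal is correct and is exactly the argument the paper intends: the corollary is stated as an immediate consequence of the final computation in the proof of Lemma~\ref{lem:key}, and you read off the equivalence from the identity for $\big|[2]f\big|-\big|[2]g\big|$ together with the sign condition $[2]f\cdot[2]g\ge 0$, including the strict positivity of the $c^+=0$ term when $C\ge 1$.
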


Let $H$ be a graph. Throughout the rest of the paper, for convenience we define $$\Delta_{H}(p):=t_H(U_p)-t_{K_2}(U_p)^{e(H)}=t_H(U_p)-(p-1)^{e(H)}.$$

\begin{lem}\label{lem:p2}
Let $H$ be a graph and $a,b$ be two vertices with $ab\notin E(H)$. If $p^2|\Delta_{H}(p)$, then $$p^2\big|\big(4f_{a,b,H}(p)-(p-1)^{e(H)}\big).$$
\end{lem}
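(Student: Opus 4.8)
The plan is to verify directly that the coefficients of $p^0$ and $p^1$ in the polynomial $4f_{a,b,H}(p)-(p-1)^{e(H)}$ both vanish; since a univariate polynomial is divisible by $p^2$ exactly when its constant and linear coefficients are zero, this proves the lemma. I would begin by recalling that $t_H(U_p)=2f_{a,b,H}(p)+2g_{a,b,H}(p)$, so that
\begin{equation*}
\Delta_H(p)=2f_{a,b,H}(p)+2g_{a,b,H}(p)-(p-1)^{e(H)},
\end{equation*}
and by invoking the two unconditional parts of Lemma~\ref{lem:key} (valid since $ab\notin E(H)$): namely $[0]f_{a,b,H}=[0]g_{a,b,H}$ and $[1]f_{a,b,H}=[1]g_{a,b,H}$.

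For the constant term, I would use the identity $f_{a,b,H}(0)=(-1)^{e(H)}/4$ recorded just before Lemma~\ref{lem:key}: it gives $[0]\bigl(4f_{a,b,H}\bigr)=(-1)^{e(H)}=[0](p-1)^{e(H)}$, so the constant term of $4f_{a,b,H}(p)-(p-1)^{e(H)}$ is zero. Note this holds regardless of the hypothesis $p^2\mid\Delta_H(p)$.

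For the linear term the hypothesis enters. Since $p^2\mid\Delta_H(p)$, the coefficient of $p^1$ in $\Delta_H$ is zero, i.e.\ $2[1]f_{a,b,H}+2[1]g_{a,b,H}=[1](p-1)^{e(H)}$. Substituting $[1]f_{a,b,H}=[1]g_{a,b,H}$ from Lemma~\ref{lem:key} yields $4[1]f_{a,b,H}=[1](p-1)^{e(H)}$, so the coefficient of $p^1$ in $4f_{a,b,H}(p)-(p-1)^{e(H)}$ is also zero. Combining the two computations gives $p^2\mid\bigl(4f_{a,b,H}(p)-(p-1)^{e(H)}\bigr)$, as desired.

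There is no real obstacle here: the only points needing a word of care are that $t_H(U_p)$, $f_{a,b,H}(p)$, $g_{a,b,H}(p)$ and $(p-1)^{e(H)}$ are all genuine polynomials in $p$, so that divisibility by $p^2$ is equivalent to the vanishing of the first two Taylor coefficients, and that the hypotheses of Lemma~\ref{lem:key} ($ab\notin E(H)$) are exactly the ones assumed here. Everything else is elementary coefficient bookkeeping.
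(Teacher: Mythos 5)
Your proof is correct and follows essentially the same route as the paper: both arguments rest on the decomposition $t_H(U_p)=2f_{a,b,H}+2g_{a,b,H}$ together with the equalities $[0]f_{a,b,H}=[0]g_{a,b,H}$ and $[1]f_{a,b,H}=[1]g_{a,b,H}$ from Lemma~\ref{lem:key}, combined with the hypothesis $p^2\mid\Delta_H(p)$ (your handling of the constant term via $f_{a,b,H}(0)=(-1)^{e(H)}/4$ is only a cosmetic variant). No gaps.
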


\begin{proof}
Note that $t_H(U_p)=2f_{a,b,H}(p)+2g_{a,b,H}(p)$.
By Lemma~\ref{lem:key}, $[0]f_{a,b,H}=[0]g_{a,b,H}$ and $[1]f_{a,b,H}=[1]g_{a,b,H}$.
This implies that $p^2|\left(4f_{a,b,H}(p)-t_H(U_p)\right)$.
Since $p^2|\Delta_{H}(p)$ and $\Delta_{H}(p)=t_H(U_p)-(p-1)^{e(H)}$,
it follows easily that $p^2|\left(4f_{a,b,H}(p)-(p-1)^{e(H)}\right)$.
\end{proof}

\begin{lem}\label{lem:Delta(H*)}
Let $H$ be a graph and $a,b$ be two vertices with $ab\notin E(H)$.
Let $H^*$ be obtained from $H$ by adding the new edge $ab$. Then
$$t_{H^*}(U_p)=4p\cdot f_{a,b,H}(p)-t_{H}(U_p) \mbox{ ~and~ } \Delta_{H^*}(p)=p\left(4f_{a,b,H}(p)-(p-1)^{e(H)}\right)-\Delta_{H}(p).$$
\end{lem}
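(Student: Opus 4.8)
The plan is to compute $t_{H^*}(U_p)$ directly from the definition of homomorphism density, splitting the domain $[0,1]^{v(H)}$ according to whether the pair $(x_a,x_b)$ lies in some $I_i\times I_i$ or in $I_i\times I_j$ with $i\neq j$; this expresses $t_{H^*}(U_p)$ in terms of the quantities $f_{a,b,H}$ and $g_{a,b,H}$ introduced in Section~2. The identity $t_H(U_p)=2f_{a,b,H}(p)+2g_{a,b,H}(p)$ then lets me eliminate $g_{a,b,H}$ and arrive at $t_{H^*}(U_p)=4p\,f_{a,b,H}(p)-t_H(U_p)$. Finally the formula for $\Delta_{H^*}$ drops out by bookkeeping, using $e(H^*)=e(H)+1$.

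First I would record the elementary but essential point that, because $ab\notin E(H)$, the edge set of $H^*$ is the disjoint union $E(H)\cup\{ab\}$, so $\prod_{uv\in E(H^*)}U_p(x_u,x_v)=U_p(x_a,x_b)\cdot\prod_{uv\in E(H)}U_p(x_u,x_v)$ for every tuple. Integrating out the $v(H)-2$ coordinates other than $x_a,x_b$ produces an inner function of $(x_a,x_b)$; integrating that function over the box $I_i\times I_j$ gives, by the very definitions of $f^i_{a,b,H}$ and $g^{ij}_{a,b,H}$, the value $f^i_{a,b,H}(p)$ when $i=j$ and $g^{ij}_{a,b,H}(p)$ when $i\neq j$. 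Since $U_p(x_a,x_b)$ equals $2p-1$ on the two diagonal boxes and $-1$ on the two off-diagonal boxes, this yields
$$t_{H^*}(U_p)=(2p-1)\big(f^1_{a,b,H}(p)+f^2_{a,b,H}(p)\big)-\big(g^{12}_{a,b,H}(p)+g^{21}_{a,b,H}(p)\big)=2(2p-1)f_{a,b,H}(p)-2g_{a,b,H}(p),$$
using $f^1_{a,b,H}=f^2_{a,b,H}=f_{a,b,H}$ and $g^{12}_{a,b,H}=g^{21}_{a,b,H}=g_{a,b,H}$.

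Next I would substitute $2g_{a,b,H}(p)=t_H(U_p)-2f_{a,b,H}(p)$ to get $t_{H^*}(U_p)=2(2p-1)f_{a,b,H}(p)-t_H(U_p)+2f_{a,b,H}(p)=4p\,f_{a,b,H}(p)-t_H(U_p)$, which is the first claimed identity. For the second I would simply expand $\Delta_{H^*}(p)=t_{H^*}(U_p)-(p-1)^{e(H^*)}$ with $e(H^*)=e(H)+1$, obtaining $4p\,f_{a,b,H}(p)-t_H(U_p)-(p-1)^{e(H)+1}$, and regroup this as $p\big(4f_{a,b,H}(p)-(p-1)^{e(H)}\big)-\big(t_H(U_p)-(p-1)^{e(H)}\big)$, where the last parenthesis is exactly $\Delta_H(p)$. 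There is no real difficulty in this argument; the only points needing attention are using $ab\notin E(H)$ to detach the single factor $U_p(x_a,x_b)$ without disturbing the remaining product, and correctly carrying the sign of that factor across the off-diagonal boxes — it is precisely the value $-1$ there, versus $2p-1$ on the diagonal, that turns $+2g_{a,b,H}$ into $-2g_{a,b,H}$ and makes the cancellation with $t_H(U_p)=2f_{a,b,H}+2g_{a,b,H}$ collapse to the clean coefficient $4p$.
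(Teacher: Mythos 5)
Your proof is correct and follows essentially the same route as the paper: decompose the integral over the four boxes for $(x_a,x_b)$ to get $t_{H^*}(U_p)=2(2p-1)f_{a,b,H}(p)-2g_{a,b,H}(p)$, eliminate $g_{a,b,H}$ via $t_H(U_p)=2f_{a,b,H}(p)+2g_{a,b,H}(p)$, and then expand $\Delta_{H^*}(p)=t_{H^*}(U_p)-(p-1)^{e(H)+1}$. No gaps; your extra remark about detaching the factor $U_p(x_a,x_b)$ using $ab\notin E(H)$ is just the implicit first step of the paper's ``by definition'' computation.
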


\begin{proof}
By definition, we have $t_{H^*}(U_p)=(2p-1)\big(f_{a,b,H}^1(p)+f_{a,b,H}^2(p)\big)-\big(g_{a,b,H}^{12}(p)+g_{a,b,H}^{21}(p)\big)=2(2p-1)f_{a,b,H}(p)-2g_{a,b,H}(p)=4p\cdot f_{a,b,H}(p)-t_{H}(U_p).$
Then it holds that
\begin{equation*}
\begin{aligned}
\Delta_{H^*}(p)&=t_{H^*}(U_p)-(p-1)^{e(H)+1}=4p\cdot f_{a,b,H}(p)-t_H(U_p)-p(p-1)^{e(H)}+(p-1)^{e(H)}\\
&=p\left(4f_{a,b,H}(p)-(p-1)^{e(H)}\right)-\Delta_{H}(p),
\end{aligned}
\end{equation*}
as desired.
\end{proof}

Using the above lemmas, we can show that $\Delta_H(p)$ is always divisible by $p^3$.

\begin{lem}\label{lem:p^3|Delta}
Let $H$ be any graph. Then $p^3|\Delta_H(p)$.
\end{lem}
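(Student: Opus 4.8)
The plan is to prove this by induction on $e(H)$. For the base case $e(H)=0$, the graph $H$ has no edges, so $t_H(U_p)=\int_{[0,1]^{v(H)}}1\,dx=1=(p-1)^{0}$, whence $\Delta_H(p)=0$, which is (trivially) divisible by $p^3$. I note in passing that isolated vertices affect neither $t_H(U_p)$ nor $e(H)$, so they cause no trouble anywhere in the argument.

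For the inductive step I would assume $e(H)\ge 1$ and that $p^3\mid\Delta_{H'}(p)$ for all graphs $H'$ with strictly fewer edges. Pick any edge $ab\in E(H)$ and set $H'=H-ab$, the graph obtained by deleting that edge but retaining $a$ and $b$; then $ab\notin E(H')$ and $H$ is obtained from $H'$ by adding the edge $ab$. Applying Lemma~\ref{lem:Delta(H*)} (with $H'$ in the role of ``$H$'' and $H$ in the role of ``$H^*$'') gives
$$\Delta_H(p)=p\bigl(4f_{a,b,H'}(p)-(p-1)^{e(H')}\bigr)-\Delta_{H'}(p).$$
Since $e(H')=e(H)-1$, the inductive hypothesis yields $p^3\mid\Delta_{H'}(p)$, and in particular $p^2\mid\Delta_{H'}(p)$; Lemma~\ref{lem:p2} then gives $p^2\mid\bigl(4f_{a,b,H'}(p)-(p-1)^{e(H')}\bigr)$. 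Hence the first summand on the right-hand side is divisible by $p\cdot p^2=p^3$ and the second by $p^3$, so $p^3\mid\Delta_H(p)$, closing the induction.

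I do not anticipate a substantive obstacle: Lemmas~\ref{lem:key},~\ref{lem:p2} and~\ref{lem:Delta(H*)} have already done all of the analytic work, and what remains is a short divisibility bookkeeping on top of the trivial base case. The only things needing care are to organize the induction so that the edge-deletion step always lands inside the hypothesis (which is why the base case is taken at $e(H)=0$), and to keep straight that in Lemma~\ref{lem:Delta(H*)} the starred graph has one \emph{more} edge, which is the opposite orientation from the deletion performed here.
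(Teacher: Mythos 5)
Your proof is correct and follows essentially the same route as the paper: induction on the number of edges, deleting an edge $ab$, applying Lemma~\ref{lem:Delta(H*)} to express $\Delta_H(p)$ in terms of $\Delta_{H-ab}(p)$ and $4f_{a,b,H-ab}(p)-(p-1)^{e(H)-1}$, and then combining the inductive hypothesis with Lemma~\ref{lem:p2}. No gaps; the bookkeeping about which graph plays the role of $H^*$ in Lemma~\ref{lem:Delta(H*)} is handled correctly.
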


\begin{proof}
Fix the number $n$ of vertices in $H$.
We prove this lemma by induction on the number of edges of $H$.
If $H$ is an $n$-vertex empty graph, the conclusion holds trivially.
Assume it holds for all $n$-vertex graphs with $k$ edges.
Now consider an $n$-vertex graph $H$ with $k+1$ edges.
Let $ab\in E(H)$ and $H^-$ be obtained from $H$ by deleting $ab$.
By Lemma~\ref{lem:Delta(H*)}, $\Delta_H(p)=p\left(4f_{a,b,H^-}(p)-(p-1)^k\right)-\Delta_{H^-}(p)$.
Using induction on $H^-$, we have $p^3|\Delta_{H^-}(p)$ and then by Lemma~\ref{lem:p2}, we get that $p^3|p\left(4f_{a,b,H^-}(p)-(p-1)^k\right)$.
Thus we obtain that $p^3|\Delta_{H}(p)$, completing the proof.
\end{proof}

Finally, we need the following lemma to indicate the sign of some coefficients in $\Delta_H(p)$.

\begin{lem}\label{lem:sign-p^2}
Let $H$ be a graph and $a,b$ be two vertices with $ab\notin E(H)$.
Then the coefficient of $p^2$ in $4f_{a,b,H}(p)-(p-1)^{e(H)}$ times $(-1)^{e(H)}$ is non-negative.
\end{lem}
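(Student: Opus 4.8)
The plan is to reduce the claim, via the lemmas already established, to the two assertions about $[2]f_{a,b,H}$ and $[2]g_{a,b,H}$ contained in Lemma~\ref{lem:key}. Write $f=f_{a,b,H}$ and $g=g_{a,b,H}$ for brevity, and recall that since $ab\notin E(H)$ we have $t_H(U_p)=2f(p)+2g(p)$.

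First I would pin down $[2]t_H(U_p)$. Since $p^3\mid\Delta_H(p)$ by Lemma~\ref{lem:p^3|Delta}, we have $[2]\Delta_H(p)=0$; as $\Delta_H(p)=t_H(U_p)-(p-1)^{e(H)}$ and $[2]\bigl((p-1)^{e(H)}\bigr)=\binom{e(H)}{2}(-1)^{e(H)}$, this yields $[2]t_H(U_p)=\binom{e(H)}{2}(-1)^{e(H)}$. Combining with $t_H(U_p)=2f(p)+2g(p)$ gives $2\,[2]f+2\,[2]g=\binom{e(H)}{2}(-1)^{e(H)}$, and hence
$$[2]\bigl(4f(p)-(p-1)^{e(H)}\bigr)=4\,[2]f-\binom{e(H)}{2}(-1)^{e(H)}=4\,[2]f-\bigl(2\,[2]f+2\,[2]g\bigr)=2\bigl([2]f-[2]g\bigr),$$
so it suffices to show $(-1)^{e(H)}\bigl([2]f-[2]g\bigr)\ge 0$.

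Next I would read off the signs of $[2]f$ and $[2]g$ from the proof of Lemma~\ref{lem:key}, where it is shown that $4\,[2]f=2^{-(v(H)-2)}\sum_k[2]h_k$ with $[2]h_k=4\binom{m_k}{2}(-1)^{e(H)}$ for non-negative integers $m_k$ (and symmetrically for $g$). Thus $(-1)^{e(H)}[2]f\ge 0$ and $(-1)^{e(H)}[2]g\ge 0$, so $|[2]f|=(-1)^{e(H)}[2]f$ and $|[2]g|=(-1)^{e(H)}[2]g$. Now the last assertion of Lemma~\ref{lem:key}, $|[2]f|\ge|[2]g|$, gives $(-1)^{e(H)}[2]f\ge(-1)^{e(H)}[2]g$, i.e.\ $(-1)^{e(H)}\bigl([2]f-[2]g\bigr)\ge 0$; combined with the previous paragraph this is the desired inequality.

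There is no real obstacle here: all the analytic and combinatorial content was already established in Lemmas~\ref{lem:key} and \ref{lem:p^3|Delta}, and only the bookkeeping of the global sign $(-1)^{e(H)}$ remains. (For a more quantitative version one can instead evaluate the coefficient directly: unwinding the combinatorial meaning of $m_k$ as the number of monochromatic edges of $H$ under the vertex $2$-colouring that places $a$, $b$ and the $I_1$-vertices of the $k$-th domain in one class, one gets $(-1)^{e(H)}\cdot 4\,[2]f=4\,\mathbb{E}\bigl[\binom{m}{2}\bigr]$, where $m$ counts the monochromatic edges of a uniformly random $2$-colouring $\chi$ of $V(H)$ with $\chi(a)=\chi(b)$; since $ab\notin E(H)$ every edge is monochromatic with probability $\tfrac12$, and a short case analysis shows the only dependent pairs of edges are the pairs $\{aw,bw\}$ with $w\in N_H(a)\cap N_H(b)$, giving $\mathbb{E}\bigl[\binom{m}{2}\bigr]=\tfrac14\binom{e(H)}{2}+\tfrac14\,|N_H(a)\cap N_H(b)|$, so that the coefficient in question equals exactly $|N_H(a)\cap N_H(b)|$, in agreement with Corollary~\ref{coro}.)
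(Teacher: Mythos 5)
Your proof is correct and follows essentially the same route as the paper: both reduce the coefficient in question to $2\bigl([2]f-[2]g\bigr)$ using Lemma~\ref{lem:p^3|Delta} (equivalently the identity $2[2]f+2[2]g=\binom{e(H)}{2}(-1)^{e(H)}$) and then conclude from Lemma~\ref{lem:key}. The only deviation is cosmetic—you read the signs $(-1)^{e(H)}[2]f\geq 0$ and $(-1)^{e(H)}[2]g\geq 0$ directly off the explicit formulas for $[2]h_k$ inside the proof of Lemma~\ref{lem:key}, whereas the paper deduces them from the sum identity via a parity case analysis—and your closing exact evaluation of the coefficient as $|N_H(a)\cap N_H(b)|$ is a correct strengthening consistent with Corollary~\ref{coro}.
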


\begin{proof}
Let $f=f_{a,b,H}$ and $g=g_{a,b,H}$.
Then Lemma~\ref{lem:p^3|Delta} states $p^3|\left(2f(p)+2g(p)-(p-1)^{e(H)}\right)$.
So $2\cdot [2]f+2\cdot [2]g=\binom{e(H)}{2}(-1)^{e(H)}$.
From Lemma~\ref{lem:key} we see $[2]f\cdot [2]g\geq 0$ and $|[2]f|\geq |[2]g|$.
Thus if $e(H)$ is odd, then $[2]f<0$, $[2]g\leq 0$, and $[2]f\leq [2]g$.
This shows that $4\cdot [2]f-\binom{e(H)}{2}(-1)^{e(H)}\leq 2\cdot [2]f+2\cdot [2]g-\binom{e(H)}{2}(-1)^{e(H)}=0.$
So the conclusion holds.
If $e(H)$ is even, we can derive $[2]f>0$ and $[2]f\geq [2]g$, which imply the same conclusion.
\end{proof}

\section{Proof of Theorem~\ref{thm:main}}

In this section, we present the proof of  Theorem~\ref{thm:main}. We begin with the following two lemmas.

\begin{lem}\label{lem:no-K3}
Let $H$ be a graph which contains no triangles. Then $p^4|\Delta_H(p)$.
\end{lem}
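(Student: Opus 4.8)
The strategy is an induction on $e(H)$, mirroring the proof of Lemma~\ref{lem:p^3|Delta} but squeezing out one extra power of $p$ by exploiting that $H$ is triangle-free. Fix the vertex count $n$; the base case ($H$ empty) is trivial since $\Delta_H\equiv 0$. For the inductive step, take an edge $ab\in E(H)$ and let $H^-=H-ab$, which is still triangle-free with $e(H^-)=e(H)-1$. By Lemma~\ref{lem:Delta(H*)}, $\Delta_H(p)=p\bigl(4f_{a,b,H^-}(p)-(p-1)^{e(H^-)}\bigr)-\Delta_{H^-}(p)$. The inductive hypothesis gives $p^4\mid\Delta_{H^-}(p)$, so it suffices to prove $p^3\mid\bigl(4f_{a,b,H^-}(p)-(p-1)^{e(H^-)}\bigr)$.

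By Lemma~\ref{lem:p2} (applied to $H^-$, using $p^2\mid p^4\mid\Delta_{H^-}(p)$ and $ab\notin E(H^-)$), we already know $p^2\mid\bigl(4f_{a,b,H^-}(p)-(p-1)^{e(H^-)}\bigr)$. So the only thing left is to kill the $p^2$-coefficient: I need $[2]\bigl(4f_{a,b,H^-}\bigr)=\binom{e(H^-)}{2}(-1)^{e(H^-)}$, i.e. $4\cdot[2]f_{a,b,H^-}=\binom{e(H^-)}{2}(-1)^{e(H^-)}$. Here is where triangle-freeness enters. Since $H^-$ contains no triangle and $ab\notin E(H^-)$ but $ab\in E(H)$, in $H$ the vertices $a,b$ are adjacent, so in $H^-$ they have no common neighbor — otherwise $a,b$ together with that neighbor would form a triangle in $H$. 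Thus $N_{H^-}(a)\cap N_{H^-}(b)=\emptyset$, and Corollary~\ref{coro} gives $[2]f_{a,b,H^-}=[2]g_{a,b,H^-}$. Combined with the identity $2\cdot[2]f_{a,b,H^-}+2\cdot[2]g_{a,b,H^-}=\binom{e(H^-)}{2}(-1)^{e(H^-)}$ (which follows from Lemma~\ref{lem:p^3|Delta}, $p^3\mid\Delta_{H^-}(p)$, by reading off the $p^2$-coefficient of $2f+2g-(p-1)^{e(H^-)}$), we get $4\cdot[2]f_{a,b,H^-}=\binom{e(H^-)}{2}(-1)^{e(H^-)}$, exactly as needed.

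Putting it together: $p^3\mid\bigl(4f_{a,b,H^-}(p)-(p-1)^{e(H^-)}\bigr)$, hence $p^4\mid p\bigl(4f_{a,b,H^-}(p)-(p-1)^{e(H^-)}\bigr)$, and since also $p^4\mid\Delta_{H^-}(p)$, the displayed formula from Lemma~\ref{lem:Delta(H*)} yields $p^4\mid\Delta_H(p)$, completing the induction.

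The routine parts are the bookkeeping already encapsulated in the earlier lemmas; I do not expect any real obstacle, since the key structural input (no common neighbor of the two endpoints of a deleted edge, by triangle-freeness) plugs directly into Corollary~\ref{coro}. The one point to state carefully is \emph{which} edge is deleted and why triangle-freeness survives and forces $N(a)\cap N(b)=\emptyset$ in $H^-$ — that is the whole content beyond the inductive machinery.
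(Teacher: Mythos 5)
Your proof is correct and is essentially the paper's own argument: an induction on the number of edges (phrased by deleting an edge rather than building up from the empty graph), using triangle-freeness to force $N_{H^-}(a)\cap N_{H^-}(b)=\emptyset$, Corollary~\ref{coro} together with the vanishing $p^2$-coefficient of $\Delta_{H^-}$ to get $p^3\mid\bigl(4f_{a,b,H^-}(p)-(p-1)^{e(H^-)}\bigr)$, and then Lemma~\ref{lem:Delta(H*)} with the inductive hypothesis $p^4\mid\Delta_{H^-}(p)$. The minor difference in bookkeeping (invoking Lemma~\ref{lem:p2} and Lemma~\ref{lem:p^3|Delta} instead of quoting $[j]f=[j]g$ for $j\le 2$ directly) is immaterial.
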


\begin{proof}
Let $(H_0, H_1,...,H_t)$ be a sequence of graphs on the same vertex set $V(H)$ such that $E(H_0)=\emptyset$, $H_{i}$ is obtained from $H_{i-1}$ by adding an edge for each $i\in [t]$, and $H_t=H$.
We will prove $p^4 |\Delta_{H_i}(p)$ using induction on $i$.
When $i=0$, this holds trivially as $\Delta_{H_0}(p)=0$.
Assume the result is true for $i$.
Let $H_{i+1}$ be obtained from $H_i$ by adding a new edge $ab$.
As $H$ has no triangle, we must have that $N_{H_i}(a)\cap N_{H_i}(b)=\emptyset$.
By Lemma~\ref{lem:key} and Corollary~\ref{coro}, we get that $[j]f_{a,b,H_{i}}=[j]g_{a,b,H_{i}}$ for each $0\leq j\leq 2$.
By induction, we have $p^4|\Delta_{H_i}(p)$, where $\Delta_{H_i}(p)=2f_{a,b,H_{i}}+2g_{a,b,H_{i}}-(p-1)^{e(H_i)}$.
Combining the above facts, we can conclude that $p^3|\big(4f_{a,b,H_i}-(p-1)^{e(H_i)}\big)$.
By Lemma~\ref{lem:Delta(H*)}, $\Delta_{H_{i+1}}(p)=p\left(4f_{a,b,H_i}(p)-(p-1)^{e(H_i)}\right)-\Delta_{H_i}(p)$.
Using $p^4|\Delta_{H_i}(p)$ again, we know that $p^4|\Delta_{H_{i+1}}$. This proves the lemma.
\end{proof}

\begin{lem}\label{lem:K3}
Let $H$ be a graph containing some triangle.
If $e(H)$ is odd, then the coefficient of $p^3$ in $\Delta_H(p)$ is positive; otherwise, the coefficient of $p^3$ in $\Delta_{H}(p)$ is negative.
\end{lem}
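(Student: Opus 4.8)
The plan is to induct on the number of edges, using the edge-addition recursion of Lemma~\ref{lem:Delta(H*)}, starting from a single triangle and adjoining the remaining edges of $H$ one at a time.

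For the base step, fix one triangle of $H$, say on vertices $v_1,v_2,v_3$, and let $H_0$ be the spanning subgraph of $H$ whose only edges are $v_1v_2,v_1v_3,v_2v_3$. Since the remaining vertices are isolated and each integrates to $1$, we have $t_{H_0}(U_p)=t_{K_3}(U_p)$. A direct computation over the two blocks of $U_p$ (the two monochromatic triples contribute $(2p-1)^3$ each and each of the six non-monochromatic triples contributes $2p-1$) gives
\[
t_{K_3}(U_p)=\tfrac14\big((2p-1)^3+3(2p-1)\big)=2p^3-3p^2+3p-1 ,
\]
so $\Delta_{H_0}(p)=t_{K_3}(U_p)-(p-1)^3=p^3$. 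In particular $[3]\Delta_{H_0}=1>0$, which matches the asserted sign since $e(H_0)=3$ is odd.

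For the inductive step, order the edges of $H$ outside $H_0$ so that $H_0\subset H_1\subset\cdots\subset H_m=H$, where $H_{i+1}$ is obtained from $H_i$ by adding one edge $a_ib_i\notin E(H_i)$; note that every $H_i$ still contains the triangle $v_1v_2v_3$. I claim, by induction on $i$, that $[3]\Delta_{H_i}\neq 0$ and has sign $(-1)^{e(H_i)+1}$ (i.e.\ is positive exactly when $e(H_i)$ is odd). Given this for $H_i$, Lemma~\ref{lem:Delta(H*)} gives $\Delta_{H_{i+1}}(p)=p\big(4f_{a_i,b_i,H_i}(p)-(p-1)^{e(H_i)}\big)-\Delta_{H_i}(p)$, and extracting the coefficient of $p^3$ (the coefficient of $p^3$ in $pQ(p)$ being $[2]Q$) yields
\[
[3]\Delta_{H_{i+1}}=[2]\big(4f_{a_i,b_i,H_i}(p)-(p-1)^{e(H_i)}\big)-[3]\Delta_{H_i}.
\]
By Lemma~\ref{lem:sign-p^2} the first summand has sign $(-1)^{e(H_i)}$ or is $0$; by the inductive hypothesis $-[3]\Delta_{H_i}$ has sign $(-1)^{e(H_i)}$ and is nonzero. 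Hence $[3]\Delta_{H_{i+1}}\neq 0$ with sign $(-1)^{e(H_i)}=(-1)^{e(H_{i+1})+1}$, which completes the induction; taking $i=m$ proves the lemma.

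The only computational content is the base-case evaluation of $t_{K_3}(U_p)$. The point to watch is that the induction genuinely propagates strictness: Lemma~\ref{lem:sign-p^2} alone supplies only a weak inequality, so the nonvanishing of $[3]\Delta_{H_{i+1}}$ is carried entirely by the term $-[3]\Delta_{H_i}$, which is nonzero by the inductive hypothesis and ultimately by the explicit value $[3]\Delta_{H_0}=1$. I do not anticipate any serious obstacle beyond this bookkeeping.
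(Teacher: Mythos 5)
Your proof is correct. Both arguments run the same edge-by-edge induction driven by Lemma~\ref{lem:Delta(H*)} (extracting $[3]\Delta_{H_{i+1}}=[2]\bigl(4f_{a,b,H_i}-(p-1)^{e(H_i)}\bigr)-[3]\Delta_{H_i}$) and the sign control of Lemma~\ref{lem:sign-p^2}, but you anchor the induction differently. The paper starts from a \emph{maximal} triangle-free spanning subgraph $H_0$, uses Lemma~\ref{lem:no-K3} to get $[3]\Delta_{H_0}=0$, and then has to work for strictness at the very first added edge: since that edge closes a triangle, $N_{H_0}(a)\cap N_{H_0}(b)\neq\emptyset$, and Corollary~\ref{coro} (strictness in Lemma~\ref{lem:key}) rules out the degenerate case where the $p^2$-coefficient of $4f_{a,b,H_0}-(p-1)^{e(H_0)}$ vanishes. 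You instead start from a single triangle and inject strictness at the base via the explicit computation $\Delta_{K_3}(p)=p^3$, after which the weak inequality of Lemma~\ref{lem:sign-p^2} propagates the nonzero sign; this lets you bypass Lemma~\ref{lem:no-K3} and Corollary~\ref{coro} entirely in this proof, making the argument a bit shorter and more self-contained (those tools are of course still needed elsewhere, e.g.\ in the proof of Theorem~\ref{thm:main} for the triangle-free $F$'s). The paper's choice of a maximal triangle-free base is the more structural one and is in the spirit of the girth question raised in the concluding remarks, but for the lemma as stated your route is a clean and valid alternative, and the bookkeeping of signs ($(-1)^{e(H_i)+1}$ versus the paper's $(-1)^{e(H_i)-1}$, which agree) is handled correctly.
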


\begin{proof}
Let $(H_0, H_1,...,H_s)$ be a sequence of graphs on the same vertex set $V(H)$ such that
$H_0$ is a maximal subgraph of $H$ which does not contain triangles, $H_{i}$ is obtained from $H_{i-1}$ by adding an edge for each $i\in [s]$, and $H_s=H$ for some $s\geq 1$.
By Lemma~\ref{lem:no-K3}, since $H_0$ does not contain triangles, it follows that $p^4|\Delta_{H_0}(p)$ and thus $[j]\Delta_{H_0}=0$ for $0\leq j\leq 3$.

To finish the proof, it suffices to show that for each $i\in [s]$, the coefficient $[3]\Delta_{H_i}$ has the same parity as $(-1)^{e(H_i)-1}$.
We first prove for $i=1$.
Let $ab\in E(H_1)\backslash E(H_0)$.
By Lemma~\ref{lem:sign-p^2}, the coefficient of $p^2$ in $4f_{a,b,H_0}-(p-1)^{e(H_0)}$ either is 0 or has the same parity as $(-1)^{e(H_0)}$.
Suppose for a contradiction that this coefficient is 0. So $[2]f_{a,b,H_0}=(-1)^{e(H_0)}\binom{e(H_0)}{2}/4$.
Note that $\Delta_{H_0}(p)=2f_{a,b,H_0}(p)+2g_{a,b,H_0}(p)-(p-1)^{e(H_0)}$ whose coefficient of $p^2$ is also 0.
This implies $[2]f_{a,b,H_0}=(-1)^{e(H_0)}\binom{e(H_0)}{2}/4=[2]g_{a,b,H_0}$.
However by the choice of $H_0$, we see $N_{H_0}(a)\cap N_{H_0}(b)\neq \emptyset$ and thus by Corollary~\ref{coro}, $|[2]f_{a,b,H_0}|>|[2]g_{a,b,H_0}|$, a contradiction.
Therefore, the coefficient of $p^2$ in $4f_{a,b,H_0}(p)-(p-1)^{e(H_0)}$ has the same parity as $(-1)^{e(H_0)}$.
Using Lemma~\ref{lem:Delta(H*)} that $\Delta_{H_{1}}(p)=p\left(4f_{a,b,H_0}(p)-(p-1)^{e(H_0)}\right)-\Delta_{H_0}(p)$ and the fact that $[3]\Delta_{H_0}=0$,
we derive that indeed $[3]\Delta_{H_1}$ has the same parity as $(-1)^{e(H_0)}=(-1)^{e(H_1)-1}$.

Now we may assume that $[3]\Delta_{H_i}$ has the same parity as $(-1)^{e(H_i)-1}$ for some $1\leq i<s$.
We consider $[3]\Delta_{H_{i+1}}$.
Using Lemma~\ref{lem:Delta(H*)}, we have $\Delta_{H_{i+1}}(p)=p\left(4f_{a,b,H_i}(p)-(p-1)^{e(H_i)}\right)-\Delta_{H_i}(p)$.
By Lemma~\ref{lem:sign-p^2}, the coefficient of $p^2$ in $4f_{a,b,H_i}(p)-(p-1)^{e(H_i)}$ either is 0 or has the same parity as $(-1)^{e(H_i)}$.
In either case, we see that $[3]\Delta_{H_{i+1}}$ should have the same parity as $(-1)^{e(H_i)}$, which is $(-1)^{e(H_{i+1})-1}$.
Inductively, this completes the proof.
\end{proof}

Now we are ready to prove Theorem~\ref{thm:main}.

\begin{proof}[\bf Proof of Theorem~\ref{thm:main}.]
Let $H$ be a graph containing some triangle with $e(H)\geq 4$.
By Proposition~\ref{prop}, we want to find a kernel $U_p$ such that $$\sum_{F\in \mathcal{E}^+(H)} \Delta_F(p)=\sum_{F\in \mathcal{E}^+(H)}\left(t_F(U_p)-t_{K_2}(U_p)^{e(F)}\right)<0.$$
Consider any graph $F\in \mathcal{E}^+(H)$.
By Lemma~\ref{lem:p^3|Delta}, we have $p^3|\Delta_F(p)$, and by Lemmas~\ref{lem:no-K3} and \ref{lem:K3},
since $e(F)$ is even, we always can get that $[3]\Delta_F\leq 0.$
Since $e(H)\geq 4$, there must exist some $F\in \mathcal{E}^+(H)$ which contains a triangle.
Thus by Lemma~\ref{lem:K3}, $[3]\Delta_F$ is strictly negative.
Putting the above all together, $p^3|\sum_{F\in \mathcal{E}^+(H)} \Delta_F(p)$, and the coefficient of $p^3$ in $\sum_{F\in \mathcal{E}^+(H)} \Delta_F(p)$ is strictly negative.
Therefore, we can take $p>0$ to be small enough such that $\sum_{F\in \mathcal{E}^+(H)} \Delta_F(p)<0$ holds, finishing the proof.
\end{proof}

\section{Concluding remarks}
In this paper, we prove that any graph properly containing a triangle is not strongly common.
Towards the characterization of strongly common graphs, certain patterns (i.e., the coefficients in $\Delta_H(p)$) appearing in our proof lead us to the following question.

\begin{quest}
Let $k\geq 1$ and $H$ be a graph of girth $2k+1$ with more than $2k+1$ edges. Is it true that $H$ is not strongly common?
\end{quest}

If true, then the recent result of Kim and Lee \cite{KL22} implies that the odd cycle $C_{2k+1}$ is the only connected graph of girth $2k+1$ which is strongly common.
We believe Lemma~\ref{lem:key} (and some similar arguments in the proof) might be useful for this question.

Lastly, we would like to remark that Theorem~\ref{thm:main} can be strengthened as the following: Any graph properly containing a triangle is not locally strongly common.
A graph $H$ is {\it locally strongly common} if for every kernel $U$, there exists $\varepsilon_0>0$ such that $$t_{H}\left(\frac{1}{2}+\varepsilon U\right)+t_{H}\left(\frac{1}{2}-\varepsilon U\right)\geq t_{K_2}\left(\frac{1}{2}+\varepsilon U\right)^{e(H)}+t_{K_2}\left(\frac{1}{2}-\varepsilon U\right)^{e(H)}$$ holds for every $0<\varepsilon<\varepsilon_0.$
Clearly if a graph $H$ is strongly common, then it is locally strongly common.
Using the same expansion as in Section 2, this strengthened statement is equivalent to prove that for any $\varepsilon>0$, there exists a kernel $U$ satisfying that
\begin{equation*}
\sum_{F\in \mathcal{E}^+(H)}\left(t_F(\varepsilon U)-t_{K_2}(\varepsilon U)^{e(F)}\right)=\sum_{F\in \mathcal{E}^+(H)}\varepsilon^{e(F)}\left(t_F(U)-t_{K_2}(U)^{e(F)}\right)<0.
\end{equation*}
It is not hard to see that the proof of Theorem~\ref{thm:main} (straightforwardly) shows that such a kernel $U$ can be chosen to be the kernel $U_p$ for some sufficiently small $p>0$.

\medskip

{\it E-mail address:} mathsch@mail.ustc.edu.cn

\medskip
	
{\it E-mail address:} jiema@ustc.edu.cn

\end{document}